\documentclass[12pt]{amsart}
\usepackage{amscd}
\usepackage{amssymb}
\usepackage{a4wide}
\usepackage{amstext}
\usepackage{amsthm}
%\usepackage{mathrsfs}
%\usepackage{latexsym}
%\usepackage{graphicx}
%\usepackage{epsfig}
%\usepackage{epstopdf}
%\usepackage{showkeys}
%\DeclareGraphicsRule{.tif}{png}{.png}{`convert #1 `dirname
%#1`/`basename #1 .tif`.png}
\usepackage{xcolor}
\linespread{1.2}
\numberwithin{equation}{section}

\renewcommand{\phi}{\varphi}

\newcommand{\rl}{\mathbb{R}}

\newcommand{\E}{E_\Lambda}

\newtheorem{Thm}{Theorem}[section]
\newtheorem{theorem}[Thm]{Theorem}

\newtheorem{lemma}[Thm]{Lemma}
\newtheorem{proposition}[Thm]{Proposition}

\newtheorem{remark}[Thm]{Remark}
\newtheorem{definition}{Definition}

\textheight=20.8truecm

\begin{document}
\sloppy
\title[Gap Theorem for Separated Sequences without Pain]{Gap Theorem for Separated Sequences without Pain}
\author{Anton Baranov, Yurii Belov and Alexander Ulanovskii}
\smallskip
\address{
\newline \phantom{x}\,\, Anton Baranov,
\newline Department of Mathematics and Mechanics, St.~Petersburg State University, St.~Petersburg, Russia,
%\newline National Research University Higher School of Economics, St.~Petersburg, Russia,
\newline {\tt anton.d.baranov@gmail.com}
\smallskip
\newline \phantom{x}\,\, Yurii Belov,
\newline Chebyshev Laboratory, St.~Petersburg State University, St. Petersburg, Russia,
\newline {\tt j\_b\_juri\_belov@mail.ru}
\newline \phantom{x}\,\, Alexander Ulanovskii,
\newline  Stavanger University, 4036 Stavanger, Norway
\newline {\tt alexander.ulanovskii@uis.no}
}

\thanks{This work was supported by Russian Science Foundation grant 14-21-00035.}

\begin{abstract} We give a simple and straightforward proof of the Gap Theorem for separated sequences by A. Poltoratski and M. Mitkovski using the Beurling--Malliavin formula for the radius of completeness.
\end{abstract}

\keywords{gap problem, Beurling--Malliavin density, exponential systems, completeness}
\subjclass[2010]{42A38, 42A65} 

\maketitle

\section{Introduction and main result}

%Let $\Lambda$ be a real sequence. Famous {\it upper Beurling--Malliavin density} $D^{BM}(\Lambda)$ (which will be defined %below) gives us  an answer about effective calculation of {\it radius of completeness $R(\Lambda)$} of $\Lambda$.

For a real discrete set $\Lambda$ consider the system of exponentials $$\E:=\{e^{i\lambda t}\}_{\lambda\in\Lambda}.$$ The famous Beurling--Malliavin theorem gives an effective formula for the {\it completeness radius} $R_\Lambda$ of $\E$ in terms of the so-called {\it upper Beurling--Malliavin density} $D^{BM}(\Lambda)$ (to be defined below). More precisely, put
$$R(\Lambda)=\sup\{a: \E\text{ is complete in } L^2(-a,a)\}.$$
Then the Beurling--Malliavin theorem \cite{bm} (for detailed exposition see \cite{hj, Koo}) states

\begin{theorem} $R(\Lambda) = \pi D^{BM}(\Lambda).$\end{theorem}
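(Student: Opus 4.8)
The route I would take is the classical one: convert completeness into a statement about zero sets of Paley--Wiener functions, and then match the two sides of the identity by means of the Beurling--Malliavin multiplier theorem. By the Hahn--Banach theorem, $\E$ fails to be complete in $L^2(-a,a)$ exactly when some nonzero $g\in L^2(-a,a)$ satisfies $\int_{-a}^{a}g(t)e^{-i\lambda t}\,dt=0$ for every $\lambda\in\Lambda$; by the Paley--Wiener theorem its Fourier transform is then a nonzero entire function of exponential type $\le a$, square-integrable on $\RR$, and vanishing on $\Lambda$. Such an annihilator of type $\le a$ is again an annihilator of type $\le a'$ for every $a'\ge a$, so the values of $a$ for which $\E$ is complete form an interval with right endpoint $R(\Lambda)$, and
\[
R(\Lambda)=\inf\bigl\{a>0:\ \exists\,f\not\equiv 0\ \text{entire, type}\le a,\ f|_\RR\in L^2(\RR),\ f|_\Lambda=0\bigr\}.
\]
It thus suffices to (i) rule out such an $f$ of type strictly below $\pi D^{BM}(\Lambda)$, and (ii) produce one of type arbitrarily close to $\pi D^{BM}(\Lambda)$ from above.

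\emph{The bound $R(\Lambda)\ge\pi D^{BM}(\Lambda)$.} If $f$ is as above, of type $a$, then $\Lambda$ is contained in the zero set $Z(f)$. Since $f$ lies in the Cartwright class, $\log|f|$ has a convergent Poisson integral over $\RR$, and the classical theory (Levinson, Cartwright) controls the distribution of the zeros of a Cartwright function of type $\le a$: $Z(f)$ is not $d$-dense along any long family of intervals once $d>a/\pi$, that is, $D^{BM}(Z(f))\le a/\pi$. Since the BM density is monotone under inclusion, $D^{BM}(\Lambda)\le a/\pi$; taking the infimum over all admissible $a$ gives $\pi D^{BM}(\Lambda)\le R(\Lambda)$.

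\emph{The bound $R(\Lambda)\le\pi D^{BM}(\Lambda)$, and the main obstacle.} Fix $a>\pi D^{BM}(\Lambda)$ and choose $d$ with $D^{BM}(\Lambda)<d<a/\pi$. Unwinding the definition of the upper BM density --- namely the failure of $\Lambda$ to be $d$-dense along any long family of intervals --- one splits $\Lambda=\Lambda_1\cup\Lambda_2$, where $\Lambda_1$ lies in the zero set of an entire function $F$ of exponential type $\le\pi d+\eps$ whose modulus on $\RR$ is dominated by $e^{\omega}$ for an admissible weight $\omega$ (regular, with $\int\omega(x)(1+x^2)^{-1}\,dx<\infty$), while $\Lambda_2$ is a sparse remainder. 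The Beurling--Malliavin multiplier theorem then produces a nonzero entire $m_1$ of type $\le\eps$ with $|m_1(x)|\le(1+x^2)^{-1}e^{-\omega(x)}$ on $\RR$, and the sparseness of $\Lambda_2$ produces a nonzero entire $m_2$ of type $\le\eps$, bounded on $\RR$, vanishing on $\Lambda_2$. Then $Fm_1m_2$ is a nonzero entire function of type $\le\pi d+3\eps<a$, square-integrable on $\RR$, and vanishing on $\Lambda$, so $R(\Lambda)\le a$; letting $a\downarrow\pi D^{BM}(\Lambda)$ completes the proof. The technical heart --- and the only genuinely hard step --- is the multiplier theorem just invoked: from an admissible weight on $\RR$ one must manufacture a nonzero entire function of \emph{arbitrarily small} exponential type dominated by it, and its proof (atomization of the weight, the associated outer function, the little multiplier theorem) is the delicate ingredient. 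The Paley--Wiener duality, the Cartwright--Levinson estimate on zero sets, and the combinatorial reduction relating the long-interval description of $D^{BM}(\Lambda)$ to zero sets are, by comparison, routine.
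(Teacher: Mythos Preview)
The paper does not prove this statement: Theorem~1.1 is the classical Beurling--Malliavin theorem, stated with a citation to \cite{bm} and pointers to \cite{hj, Koo} for a full treatment, and then used as a black box in the derivation of Theorem~\ref{mainth}. There is therefore no ``paper's own proof'' to compare against.

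Your outline is the standard route one finds in those references: Hahn--Banach plus Paley--Wiener to recast $R(\Lambda)$ as an extremal problem for zero sets of entire functions of exponential type; the easy inequality $R(\Lambda)\ge\pi D^{BM}(\Lambda)$ via the Cartwright--Levinson bound on the BM density of the zero set of a Cartwright-class function; and the hard inequality via the Beurling--Malliavin multiplier theorem. You correctly identify the multiplier theorem as the deep ingredient and do not pretend to prove it. As a high-level sketch this is accurate, and it is precisely what the authors are treating as known input rather than something to be reproved. Indeed, the paper's message is the opposite of what your write-up attempts: rather than re-deriving Theorem~1.1, the authors show that Theorem~\ref{mainth} follows from it by elementary means (Propositions~\ref{p1}--\ref{p3}), so that the two results are, for separated sequences, equivalent.
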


The elegance and finality of this result impresses mathematicians over 50 years. Nevertheless, the dual concept of {\it the lower Beurling--Malliavin density} $D_{BM}(\Lambda)$ had found practical use only some years ago. 

 Let $\Lambda$ be a  separated set, i.e. 
\begin{equation}\label{sep}
d(\Lambda):=\inf_{\lambda,\lambda'\in\Lambda,\lambda\ne\lambda'}|\lambda-\lambda'|>0.
\end{equation} Denote by $M(\Lambda)$ the set of finite complex measures supported by $\Lambda$. 
The  {\it gap characteristic}  $G(\Lambda)$ is defined by
$$G(\Lambda)=\sup\{a: \exists  \mu\in M(\Lambda)\setminus\{0\} \text{ such that }  \hat{\mu}(x)=0, x\in(-a,a)\}.$$
%It is easy to see that in this definition we can consider only measures with finite variation. The quantity $G(\Lambda)$ is called {\it the gap characteristic of $\Lambda$}.

In 2010 M. Mitkovski and A. Poltoratski \cite{mp} proved the following result:
\begin{theorem}  Assume $\Lambda\subset\mathbb{R}$ is  a separated set. Then
$$G(\Lambda)=\pi D_{BM}(\Lambda).$$
\label{mainth}
\end{theorem}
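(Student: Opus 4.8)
\emph{Proof plan.} The plan is to recast the gap condition as a statement about meromorphic functions of controlled exponential type and then read the answer off the Beurling--Malliavin formula $R(\Lambda)=\pi D^{BM}(\Lambda)$ (Theorem~1.1). For $\mu=\sum_\lambda c_\lambda\delta_\lambda\in M(\Lambda)$ with $\sum_\lambda|c_\lambda|<\infty$, form the Cauchy transform $K_\mu(z)=\sum_\lambda\frac{c_\lambda}{\lambda-z}$, a meromorphic function with simple poles on $\supp\mu\subseteq\Lambda$. The Fourier inversion formula realizes $K_\mu$ in $\mathbb{C}_+$ (resp.\ $\mathbb{C}_-$) as a Laplace transform of the restriction of $\hat\mu$ to $[0,\infty)$ (resp.\ $(-\infty,0]$), so that $\hat\mu\equiv0$ on $(-a,a)$ is \emph{equivalent} to
$$e^{-iaz}K_\mu\in N^{+}(\mathbb{C}_+)\qquad\text{and}\qquad e^{iaz}K_\mu\in N^{+}(\mathbb{C}_-),$$
$N^+$ denoting the Smirnov class; conversely the jump across $\mathbb{R}$ of such a function is a finite measure on $\Lambda$ with spectral gap $(-a,a)$. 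Thus $G(\Lambda)$ equals the supremum of those $a$ for which $\Lambda$ carries a nonzero meromorphic $K$ with simple poles, summable residues, $K(z)\to0$ at infinity, and decay $e^{-a|\im z|}$ away from $\mathbb{R}$. Multiplying or dividing by a Cartwright generating function $S$ with zero set $\supp\mu$ turns this into a question about entire functions of exponential type $\le\tau(S)-a$ with prescribed zeros and real-line growth; since the least exponential type of a generating function of a separated set is $\pi$ times its Beurling--Malliavin density, this is exactly the regime governed by Theorem~1.1.

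\emph{Lower bound $G(\Lambda)\ge\pi D_{BM}(\Lambda)$.} Fix $a<\pi D_{BM}(\Lambda)$. By the definition of the lower density, $\Lambda$ contains a (nearly) uniformly distributed subset $\Lambda_0$ of density arbitrarily close to $D_{BM}(\Lambda)$; such a set has a sine-type generating function $S$ of type $\tau(S)>a$, with $|S|\asymp1$ off its zeros and $|S'|\asymp1$ on $\Lambda_0$. Take $E$ a nonzero Paley--Wiener function of exponential type $\tau(S)-a$ lying in $L^1(\mathbb{R})$; then $\sum_{\lambda\in\Lambda_0}|E(\lambda)/S'(\lambda)|<\infty$ by the Plancherel--P\'olya inequality, a direct estimate gives $e^{\mp iaz}(E/S)\in H^\infty(\mathbb{C}_\pm)$, and $E/S\to0$ at infinity. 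By the reformulation, the residues of $E/S$ form a measure in $M(\Lambda_0)\setminus\{0\}\subseteq M(\Lambda)\setminus\{0\}$ with spectral gap $(-a,a)$; letting $a\uparrow\pi D_{BM}(\Lambda)$ gives the bound.

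\emph{Upper bound $G(\Lambda)\le\pi D_{BM}(\Lambda)$.} Let $\mu\in M(\Lambda)\setminus\{0\}$ with $\hat\mu\equiv0$ on $(-a,a)$; replacing $\Lambda$ by $\supp\mu$ (which only lowers $D_{BM}$) we may assume $\mu$ has full support, so $K_\mu$ has simple poles exactly on $\Lambda$. If $D_{BM}(\Lambda)<a/\pi$, some long sequence of intervals $\{I_j\}$ carries $\Lambda$ with density $\le d<a/\pi$. Using that $K_\mu$ is ``large'' only near $\mathbb{R}$ (its $e^{-a|\im z|}$-decay) while having there only the $d$-sparse poles $\Lambda\cap\bigcup_j I_j$, paired with a generating function of $\Lambda\setminus\bigcup_j I_j$ of type $\le\pi D^{BM}(\Lambda)$, one produces a nonzero entire function of exponential type strictly below $\pi D^{BM}(\Lambda)$ vanishing on all of $\Lambda$ --- contradicting Theorem~1.1, by which $\E$ is complete in $L^2(-b,b)$ for every $b<\pi D^{BM}(\Lambda)$. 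Hence $a\le\pi D_{BM}(\Lambda)$, and taking the supremum over admissible $a$ finishes the proof.

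\emph{The main obstacle.} Everything rests on bridging the \emph{lower} density $D_{BM}$ appearing in the answer with the Cauchy transform and Theorem~1.1, which are naturally phrased through the \emph{upper} density $D^{BM}$. For the lower bound this is the extraction of a genuinely regular, nearly uniformly distributed subset of $\Lambda$ of density $\approx D_{BM}(\Lambda)$ admitting a well-behaved generating function; for the upper bound it is the sharp Beurling gap phenomenon, which must improve the easy estimate $G\le R=\pi D^{BM}$ by manufacturing a vanishing entire function of type $<\pi D^{BM}(\Lambda)$ out of a single long sequence of sparse intervals. The remaining ingredients --- exponential-type estimates for quotients of entire functions on $\mathbb{R}$, summability of residues, and the Smirnov-class bookkeeping --- are routine once these density manipulations are in hand.
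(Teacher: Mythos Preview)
Your approach differs substantially from the paper's. The paper never works directly with the Cauchy transform of the gap measure nor tries to manufacture an entire function of small type vanishing on $\Lambda$. Instead it exploits a clean \emph{complementarity} trick: after a small perturbation (Proposition~2.3) one may assume $\Lambda\subset\alpha\mathbb{Z}$; then two elementary identities,
\[
D_{BM}(\Lambda)+D^{BM}(\alpha\mathbb{Z}\setminus\Lambda)=\tfrac{1}{\alpha}
\qquad\text{and}\qquad
G(\Lambda)+R(\alpha\mathbb{Z}\setminus\Lambda)=\tfrac{\pi}{\alpha},
\]
reduce \emph{both} inequalities at once to Theorem~1.1 applied to the complement $\alpha\mathbb{Z}\setminus\Lambda$. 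This buys a complete proof with no hard analysis beyond the Beurling--Malliavin theorem itself; in particular the delicate upper bound never has to be faced directly.

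Your lower bound is reasonable and can be made rigorous (that is the easy direction). The upper bound, however, has a genuine gap. You assume $a<G(\Lambda)$ but $D_{BM}(\Lambda)<a/\pi$ and seek a nonzero entire function of type strictly below $\pi D^{BM}(\Lambda)$, in $L^2(\mathbb{R})$, vanishing on all of $\Lambda$, to contradict Theorem~1.1. The construction you sketch does not deliver this. Multiplying $K_\mu$ by a generating function $S$ of $\Lambda$ (or of $\Lambda\setminus\bigcup_j I_j$) gives an entire function of type roughly $\tau(S)-a$, but its zero set is that of $S$ \emph{minus} the poles of $K_\mu$: the product $S\cdot K_\mu$ does not vanish on $\supp\mu$ at all. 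To force vanishing on all of $\Lambda$ you would have to multiply by another copy of $S$, which destroys the type saving. The phrase ``paired with a generating function \ldots\ one produces'' hides precisely the step that carries the full difficulty of the theorem, and the appeal to ``a long sequence of sparse intervals'' does not explain how sparsity on part of $\mathbb{R}$ lowers the exponential type of a function that must still vanish on the dense remainder of $\Lambda$. You essentially acknowledge this in your final paragraph. The paper's complementarity argument sidesteps the issue entirely: the hard inequality for $G(\Lambda)$ becomes the \emph{easy} inequality for $R(\alpha\mathbb{Z}\setminus\Lambda)$, and vice versa.
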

The proof of this result in \cite{mp} uses theory of model subspaces of Hardy class $H^2$, theory of Toeplitz kernels and some other tools.

The aim of our paper is to show that Theorem \ref{mainth} can be {\it directly derived } from Theorem~1.1. So, instead of two difficult results in harmonic analysis essentially we have only one.

It should be noted that for non-separated sequences $\Lambda$ the formula for gap characteristic was recently found by A. Poltoratski \cite{p}. This formula is much more involved and includes {\it the concept of energy}.  It is not clear (at least to the authors) whether this formula also can be directly derived from the classical Beurling--Malliavin theory.

\section{Proof of Theorem \ref{mainth}}
 Theorem \ref{mainth} is a straightforward consequence of Theorem 1.1 and three rather elementary results stated below.

 The first result shows that the upper and lower Beurling--Malliavin densities are in a sense complementary:

\begin{proposition} Assume $\Lambda\subset \alpha\mathbb{Z}, \alpha>0$. Then
$$D_{BM}(\Lambda)+D^{BM}(\alpha\mathbb{Z}\setminus \Lambda)=1/\alpha.$$
\label{p1}
\end{proposition}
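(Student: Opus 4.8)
The plan is to unwind the definitions of the two Beurling--Malliavin densities in terms of sequences of long intervals and then argue that $\Lambda$ and $\alpha\Z\setminus\Lambda$ partition the ``mass'' of $\alpha\Z$ in a way that forces the densities to be complementary. Recall that both $D^{BM}$ and $D_{BM}$ are defined via the notion of a sequence of intervals $I_n$ with $|I_n|\to\infty$ that is \emph{long} (resp. \emph{short}) in the Beurling--Malliavin sense, i.e. such that $\sum_n |I_n|^2/(1+\dist(0,I_n))^2 = \infty$ while controlling the number of points of $\Lambda$ in $I_n$. Concretely, for a separated set contained in the lattice $\alpha\Z$, one has the formulas
$$
D^{BM}(\Lambda)=\tfrac1\alpha\inf\Big\{\delta: \exists\ \{I_n\}\ \text{long},\ \#(\Lambda\cap I_n)\le \delta|I_n|/\alpha \text{ eventually}\Big\},
$$
$$
D_{BM}(\Lambda)=\tfrac1\alpha\sup\Big\{\delta: \exists\ \{I_n\}\ \text{long},\ \#(\Lambda\cap I_n)\ge \delta|I_n|/\alpha \text{ eventually}\Big\}.
$$
(If a different but equivalent characterization is used in the sources, I would invoke that one instead; the point is that both densities are extremal values of the asymptotic fraction of lattice points captured by $\Lambda$ along some long sequence of intervals.) The first step is to fix precisely this characterization, citing \cite{hj} or \cite{Koo}.

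The second step is the elementary counting identity: for any interval $I$,
$$
\#(\Lambda\cap I)+\#\big((\alpha\Z\setminus\Lambda)\cap I\big)=\#(\alpha\Z\cap I)=\frac{|I|}{\alpha}+O(1).
$$
Dividing by $|I|/\alpha$ and letting $|I|\to\infty$ along any fixed sequence of intervals, the lower asymptotic density of $\Lambda$ plus the upper asymptotic density of $\alpha\Z\setminus\Lambda$ (along that same sequence) sum to exactly $1$; the $O(1)$ error is killed because $|I_n|\to\infty$.

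The third step combines these. Given a long sequence $\{I_n\}$ realizing $D_{BM}(\Lambda)$ up to $\eps$ from below, the same sequence is long and, by step two, gives $\#((\alpha\Z\setminus\Lambda)\cap I_n)\le (1-\delta)|I_n|/\alpha + O(1)$ with $\delta$ close to $\alpha D_{BM}(\Lambda)$; feeding this into the infimum defining $D^{BM}(\alpha\Z\setminus\Lambda)$ yields $D^{BM}(\alpha\Z\setminus\Lambda)\le 1/\alpha - D_{BM}(\Lambda)+\eps$. The reverse inequality is symmetric: a long sequence nearly realizing $D^{BM}(\alpha\Z\setminus\Lambda)$ gives, via step two, a lower bound on $\#(\Lambda\cap I_n)$, hence a competitor in the supremum defining $D_{BM}(\Lambda)$. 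Letting $\eps\to 0$ gives the claimed equality.

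The main obstacle I anticipate is purely bookkeeping: one must be careful that the same ``long'' sequence of intervals is admissible in both density problems and that the ``eventually'' quantifiers and the $O(1)$ lattice-counting errors genuinely wash out in the limit. There is also a minor subtlety if $\alpha\Z\setminus\Lambda$ fails to be separated --- but it automatically is, being a subset of $\alpha\Z$ --- so no extra hypothesis is needed. Once the correct interval-characterization of the two densities is in hand, the argument is a short symmetric duality computation with no analytic input.
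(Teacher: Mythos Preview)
Your proposal has a genuine gap: the interval characterizations you write down for the two densities are wrong --- in fact they are interchanged. The standard long-family characterization of the \emph{upper} Beurling--Malliavin density is
\[
D^{BM}(\Gamma)=\sup\Big\{a:\ \exists\ \{I_n\}\ \text{long, disjoint, with }\#(\Gamma\cap I_n)\ge a|I_n|\ \text{for all }n\Big\},
\]
that is, a supremum over long families on which $\Gamma$ is \emph{dense}, not an infimum over families on which it is sparse. A quick test: with your formula, $\Gamma=\alpha\mathbb{N}$ would have $D^{BM}=0$ (take any long family on the negative half-line), whereas the correct value is $1/\alpha$. What you wrote as the formula for $D_{BM}$ is in fact the formula for $D^{BM}$.

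Even after swapping, the argument in Step~3 needs the dual statement
\[
D_{BM}(\Lambda)=\inf\Big\{a:\ \exists\ \{I_n\}\ \text{long, with }\#(\Lambda\cap I_n)\le a|I_n|\ \text{for all }n\Big\}.
\]
This is not in the sources you plan to cite (\cite{hj} and \cite{Koo} treat only $D^{BM}$), and the paper \emph{defines} $D_{BM}(\Lambda)$ differently, as the supremum of $a$ for which $\Lambda$ contains a strongly $a$-regular subsequence. Showing that this definition coincides with the interval formula above is not a citation but essentially the content of the proposition itself: once the interval formula is granted, the complementarity is indeed the two-line counting argument you describe. So the step ``fix precisely this characterization, citing \cite{hj} or \cite{Koo}'' is where the real work is hidden, and as written the argument is circular.

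For comparison, the paper avoids interval characterizations altogether. It works directly with the strongly-regular definitions and uses Redheffer's equivalent form of $D^{BM}$ (Definition~\ref{def2}) to show the key point: when $\Gamma=\alpha\Z\setminus\Lambda$ is completed to a strongly $b$-regular set $\Gamma\cup\Gamma'$ for $b$ just above $D^{BM}(\Gamma)$, the completing set $\Gamma'$ can be arranged to lie \emph{inside} $\Lambda$. Then $\alpha\Z\setminus(\Gamma\cup\Gamma')\subset\Lambda$ is strongly $(1/\alpha-b)$-regular, giving one inequality; the other is immediate from the definitions. The nontrivial external input is the equivalence of the Kahane and Redheffer descriptions of $D^{BM}$, not any interval description of $D_{BM}$.
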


Here and below we put $\alpha\mathbb{Z}=\{\alpha n:n\in\mathbb{Z} \}$.

A similar result is true for the  completeness radius and the gap characteristic:

\begin{proposition} Assume $\Lambda\subset \alpha\mathbb{Z}, \alpha>0$. Then
$$G(\Lambda)+R(\alpha\mathbb{Z}\setminus\Lambda)=\pi/\alpha.$$
\label{p2}
\end{proposition}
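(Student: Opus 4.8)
The plan is to prove the two inclusions $G(\Lambda)\ge \pi/\alpha - R(\alpha\Z\setminus\Lambda)$ and $R(\alpha\Z\setminus\Lambda)\ge \pi/\alpha - G(\Lambda)$ separately, each time by converting a statement about one of the two quantities into a statement about the other via the discrete Fourier transform on the lattice $\alpha\Z$. The basic dictionary is this: a measure $\mu=\sum_{\lambda\in\Lambda}c_\lambda\delta_\lambda$ supported on $\Lambda\subset\alpha\Z$ has Fourier transform $\hat\mu(x)=\sum c_\lambda e^{-i\lambda x}$, which is a $(2\pi/\alpha)$-periodic function; restricting $x$ to one period $(-\pi/\alpha,\pi/\alpha)$ identifies such Fourier transforms with elements of $L^2$ of that period (after writing $\lambda=\alpha n$, this is literally a Fourier series $\sum c_{\alpha n}e^{-i\alpha n x}$). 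So "$\hat\mu$ vanishes on $(-a,a)$" means the periodic function $\hat\mu$ vanishes on a subinterval of length $2a$ of its period of length $2\pi/\alpha$, i.e. it is supported on the complementary closed arc of length $2\pi/\alpha - 2a$.

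First I would prove $G(\Lambda)\ge \pi/\alpha - R(\alpha\Z\setminus\Lambda)$. Fix $a < \pi/\alpha - R(\alpha\Z\setminus\Lambda)$; then $b:=\pi/\alpha - a > R(\alpha\Z\setminus\Lambda)$, so by the definition of $R$ the system $E_{\alpha\Z\setminus\Lambda}=\{e^{i\mu t}\}_{\mu\in\alpha\Z\setminus\Lambda}$ is \emph{not} complete in $L^2(-b,b)$. Hence there is a nonzero $g\in L^2(-b,b)$ with $\int_{-b}^b g(t)e^{i\mu t}\,dt=0$ for all $\mu\in\alpha\Z\setminus\Lambda$; equivalently $\hat g(-\mu)=0$ for those $\mu$, where $\hat g$ is entire of exponential type $\le b$. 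Now consider the measure whose Fourier coefficients along $\alpha\Z$ are $c_{\alpha n}:=\hat g(-\alpha n)$. These coefficients are supported on $\Lambda$ (since they vanish off $\Lambda$), they are square-summable because $\{e^{i\alpha n t}\}_{n}$ is an orthogonal basis of $L^2(-\pi/\alpha,\pi/\alpha)$ and $\hat g(-\alpha n)$ are, up to normalization, the Fourier coefficients of (the zero-extension of) $g$ — and not all zero since $g\not\equiv 0$. The function $G(x):=\sum_n c_{\alpha n}e^{-i\alpha n x}$ is then a nonzero $(2\pi/\alpha)$-periodic $L^2$ function. The key point is that $G$ vanishes on $(-a,a)$: on a single period $G$ agrees with the $2\pi/\alpha$-periodization of $g$, and since $g$ is supported in $(-b,b)=(-\pi/\alpha+a,\pi/\alpha-a)$, the periodization has a gap of length $2a$ around every point of $\alpha\Z\cdot(\pi/\alpha)$... wait — more carefully, $\operatorname{supp} g\subset(-b,b)$ means the translates $\operatorname{supp}g+(2\pi/\alpha)\Z$ miss the interval $(b-2\pi/\alpha, -b) + 2\pi/\alpha\,(\text{shift})$; choosing the fundamental period centered appropriately, $G$ vanishes on an interval of length $2\pi/\alpha - 2b = 2a$. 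Since $G$ is not literally a finite measure's transform, I would then approximate: truncate the series $\sum c_{\alpha n}e^{-i\alpha n x}$ to $|n|\le N$; each truncation is $\widehat{\mu_N}$ for $\mu_N\in M(\Lambda)\setminus\{0\}$ a finite measure, but it no longer vanishes exactly on $(-a,a)$. To fix the gap exactly one multiplies by a fixed smooth bump: pick a nonzero finite measure $\nu$ on $\alpha\Z$ (e.g. finitely supported) whose transform is small on $(-a,a)$... Actually the cleanest route is the standard one: since $g\in L^2$ with compact support, convolve $g$ with an approximate identity supported in a tiny interval to make it smooth, keeping support in $(-b',b')$ with $b'$ slightly larger than $b$ but still $<\pi/\alpha - a$; then $\hat g$ decays, so the coefficients $c_{\alpha n}=\hat g(-\alpha n)$ are absolutely summable and $\sum|c_{\alpha n}|<\infty$; now $\mu:=\sum_n c_{\alpha n}\delta_{\alpha n}$ is a genuine finite measure in $M(\Lambda)\setminus\{0\}$ and $\hat\mu(x)=\sum_n c_{\alpha n}e^{-i\alpha n x}$ which is exactly the $(2\pi/\alpha)$-periodization of $\hat g$ evaluated... hmm, by Poisson summation $\sum_n \hat g(-\alpha n) e^{-i\alpha n x} = \frac{2\pi}{\alpha}\sum_k g\!\left(x + \tfrac{2\pi k}{\alpha}\right)$, which vanishes on $(-a,a)$ because each translate of $\operatorname{supp}g\subset(-b',b')$ misses $(-a,a)$ as $b'+a<\pi/\alpha\le$ (spacing between translates is $2\pi/\alpha$). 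Hence $a\le G(\Lambda)$, and letting $a\uparrow$ gives the inequality.

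For the reverse inequality $R(\alpha\Z\setminus\Lambda)\ge \pi/\alpha - G(\Lambda)$, I would run the same correspondence backwards. Fix $a<G(\Lambda)$; pick $\mu\in M(\Lambda)\setminus\{0\}$ with $\hat\mu\equiv 0$ on $(-a,a)$. Writing $\mu=\sum c_{\alpha n}\delta_{\alpha n}$ with $\sum|c_{\alpha n}|<\infty$ and all $c_{\alpha n}=0$ for $\alpha n\notin\Lambda$, the function $\hat\mu(x)=\sum c_{\alpha n}e^{-i\alpha n x}$ is continuous, $(2\pi/\alpha)$-periodic, not identically zero, and vanishes on the open interval $(-a,a)$ of length $2a$; so its restriction to one period is a nonzero $L^2$ function supported in an arc of length $2\pi/\alpha - 2a =: 2b$. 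That restriction, viewed as a function on $(-b,b)$ after translating, is some $0\ne g\in L^2(-b,b)$ whose Fourier coefficients against $\{e^{i\alpha n t}\}$ are (proportional to) $c_{\alpha n}$; these vanish for $\alpha n\notin\Lambda$, i.e. $\int_{-b}^b g(t)e^{i\mu t}\,dt=0$ for all $\mu\in\alpha\Z\setminus\Lambda$. Hence $E_{\alpha\Z\setminus\Lambda}$ is not complete in $L^2(-b,b)$, so $R(\alpha\Z\setminus\Lambda)\ge b=\pi/\alpha - a$; let $a\uparrow G(\Lambda)$.

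The main obstacle, and the place needing care, is the passage between the honest finite-measure / genuine-$L^2(-b,b)$-function formulations and the periodic-Fourier-series picture: matching up "gap of the periodized function" with "support length" requires being attentive to open versus closed intervals and to the precise centering of the fundamental period (so that the gap interval $(-a,a)$ and the support interval sit symmetrically and don't accidentally overlap through periodicity), and one must make sure that on the $G(\Lambda)\ge\cdots$ side the coefficients $c_{\alpha n}=\hat g(-\alpha n)$ really are $\ell^1$ (hence define a finite measure) — which is why I would first mollify $g$ — and on the $R\ge\cdots$ side that the periodic function $\hat\mu$, being merely continuous, genuinely restricts to a nonzero $L^2$ function on the complementary arc. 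All of these are routine once the bookkeeping is set up, and no tool beyond Parseval for Fourier series (and, optionally, Poisson summation as a convenient bookkeeping device) is needed.
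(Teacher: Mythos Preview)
Your approach is essentially identical to the paper's: both directions are obtained by passing between a measure supported on $\Lambda\subset\alpha\Z$ and a function in $L^2$ of one period via Fourier series, with a mollification step to force the coefficients into $\ell^1$ when constructing the measure.

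There is, however, a bookkeeping slip in the second half. What you call the ``reverse inequality'' $R(\alpha\Z\setminus\Lambda)\ge\pi/\alpha-G(\Lambda)$ is the \emph{same} inequality as the first one, and your final sentence ``not complete in $L^2(-b,b)$, so $R(\alpha\Z\setminus\Lambda)\ge b$'' has the wrong sign: incompleteness in $L^2(-b,b)$ gives $R(\alpha\Z\setminus\Lambda)\le b=\pi/\alpha-a$, and letting $a\uparrow G(\Lambda)$ yields $R+G\le\pi/\alpha$, which is the inequality you actually need. With that correction your two arguments together do establish both directions and coincide with the paper's proof.
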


Given a separated set $\Lambda$, we consider its perturbations: 
\begin{equation}\label{per}
\tilde{\Lambda}=\{\lambda+\varepsilon_\lambda:  \lambda\in\Lambda\}.
\end{equation} 
%We will assume  that all perturbations satisfy $|\varepsilon_\lambda|<d(\Lambda)/4,$ where $d(\Lambda)$ is the separation constant in (\ref{sep}). Then, clearly, $\tilde{\Lambda}$ is also a separated set.
The third result shows that  some {\it positive perturbations} do not change the gap characteristic:

\begin{proposition} Assume $\Lambda$ is a separated set. For every  positive number $\delta<d(\Lambda)/4,$ where $d(\Lambda)$ is the separation constant in \eqref{sep}, and  all numbers $\varepsilon_\lambda$ satisfying 
\begin{equation}\label{eps} \delta/2<\varepsilon_\lambda<\delta,\quad\lambda\in\Lambda,
\end{equation} 
the set $\tilde{\Lambda}$ in \eqref{per}  satisfies
$$
G(\tilde{\Lambda})=G(\Lambda).
$$
\label{p3}
\end{proposition}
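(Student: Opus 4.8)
The plan is to show both inequalities $G(\tilde\Lambda)\le G(\Lambda)$ and $G(\tilde\Lambda)\ge G(\Lambda)$ by transferring annihilating measures between $\Lambda$ and $\tilde\Lambda$. The natural device is the bijection $\lambda\mapsto\lambda+\eps_\lambda$: given a measure $\mu=\sum_{\lambda}c_\lambda\delta_\lambda\in M(\Lambda)$ we set $\tilde\mu=\sum_\lambda c_\lambda\delta_{\lambda+\eps_\lambda}\in M(\tilde\Lambda)$, and then $\widehat{\tilde\mu}(x)=\sum_\lambda c_\lambda e^{-i(\lambda+\eps_\lambda)x}$. Since $|\widehat\mu(x)-\widehat{\tilde\mu}(x)|\le\sum_\lambda|c_\lambda|\,|1-e^{-i\eps_\lambda x}|\le |x|\delta\,\|\mu\|$, the perturbation is small for $|x|$ small; but this crude bound does not by itself preserve the \emph{exact} vanishing of the Fourier transform on an interval, so a direct "closeness" argument is not enough. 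I expect this to be the main obstacle: we need the perturbation to produce measures whose transforms vanish \emph{identically} on a slightly modified interval, not just approximately.

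The idea to get around this is to realize the perturbed transform as a genuine Fourier transform of a measure obtained by a multiplier/convolution trick. Write $\widehat{\tilde\mu}(x)=\sum_\lambda c_\lambda e^{-i\eps_\lambda x}e^{-i\lambda x}$; if all the $\eps_\lambda$ were equal to a common $\eps$, this would just be $\widehat\mu(x-\eps)$ — a translate, which changes the vanishing interval $(-a,a)$ to $(\eps-a,\eps+a)$, still of length $2a$ but off-center, so it would still contain some interval $(-a',a')$ only after further work. Because the $\eps_\lambda$ are \emph{not} equal, but are confined to the thin window $(\delta/2,\delta)$ by \eqref{eps}, the first step I would carry out is to factor out the common translation by $\delta/2$ (or by $3\delta/4$, the midpoint), reducing to the case $0<\eps_\lambda<\delta/2$ with $\delta/2<d(\Lambda)/8$, and then to exploit that the \emph{remaining} perturbations are bounded by an arbitrarily small fraction of the separation constant.

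Concretely, the second step is to use Proposition~\ref{p2} to convert the statement about $G$ into one about the completeness radius $R$ after embedding $\Lambda$ into a lattice $\alpha\ZZ$; but since $\tilde\Lambda$ need not sit inside a lattice, the cleaner route is to argue directly. I would fix $\mu\in M(\Lambda)\setminus\{0\}$ with $\widehat\mu\equiv 0$ on $(-a,a)$ and show $\widehat{\tilde\mu}$ vanishes on $(-a,a)$ as well: this follows because the map $\mu\mapsto\tilde\mu$ can be inverted with the same bounds (the hypothesis \eqref{eps} is symmetric enough that $\tilde{\tilde\Lambda}$ with shifts $-\eps_\lambda$ recovers $\Lambda$, and $|\!-\!\eps_\lambda|<\delta<d(\Lambda)/4$ still holds), so it suffices to prove one inclusion $G(\tilde\Lambda)\ge G(\Lambda)$ and apply it twice. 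For that inclusion, the key analytic input is that a function of exponential type zero vanishing on an interval vanishes identically, combined with a normal-families/compactness argument: approximating $\eps_\lambda$ by rationals with a common denominator puts $\tilde\Lambda$ into a finer lattice $(\alpha/q)\ZZ$, reduces to a statement about lattice subsets where Proposition~\ref{p1} and Theorem~1.1 apply, and then a limiting argument (letting the denominators grow) together with the separation hypothesis controlling the relevant Beurling--Malliavin densities delivers $G(\tilde\Lambda)=G(\Lambda)$. The hard part, as noted, is ensuring the limit is exact rather than merely an inequality; the thin-window condition \eqref{eps} is precisely what prevents the densities from jumping in the limit.
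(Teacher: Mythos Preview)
Your proposal has a genuine gap: the entire argument is deferred to the ``limiting argument'' in the last paragraph, and that step is essentially circular. You want to approximate $\tilde\Lambda$ by perturbations $\tilde\Lambda_n$ lying in finer and finer lattices, compute $G(\tilde\Lambda_n)$ there via case~(i) of the main theorem, and then pass to the limit. But passing to the limit requires exactly the kind of stability of $G$ under small perturbations that Proposition~\ref{p3} asserts; there is no independent mechanism (compactness, normal families, or otherwise) that forces $G(\tilde\Lambda_n)\to G(\tilde\Lambda)$. You yourself flag this as ``the hard part,'' and the thin-window condition \eqref{eps} does not by itself supply it: controlling densities in the limit is easy, but $G$ is not a priori a continuous function of the sequence. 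Your earlier direct idea (transporting $\mu$ to $\tilde\mu$ with the same masses) was correctly discarded, and nothing in the proposal replaces it with a construction that produces a measure on $\tilde\Lambda$ whose transform vanishes \emph{exactly} on an interval.

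The paper's proof is entirely different and avoids limits. The key object is the meromorphic function
\[
\varphi(z)=-\prod_{j}\frac{1-z/\lambda_j}{1-z/\tilde\lambda_j},
\]
which has zeros on $\Lambda$ and simple poles on $\tilde\Lambda$. The condition $\delta/2<\eps_\lambda<\delta<d(\Lambda)/4$ forces $\Lambda$ and $\tilde\Lambda$ to \emph{interlace}, so $\varphi$ is a Herglotz function: $\Im\varphi>0$ in the upper half-plane, hence $|\varphi(iy)|=O(|y|)$ and all residues are positive and summable against $\tilde\lambda_k^{-2}$. Given a measure $\mu=\sum d_j\delta_{\lambda_j}$ with spectral gap $[-a,a]$ and rapidly decaying $d_j$ (Lemma~5.2(iii)), one multiplies its Cauchy transform by $\varphi(z)/((z-x_1)(z-x_2))$; the zeros of $\varphi$ kill the poles at $\Lambda$ and one is left with the Cauchy transform of a finite measure on $\tilde\Lambda\cup\{x_1,x_2\}$. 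The spectral gap is detected via the Cauchy-transform criterion of Lemma~5.1 (decay of $e^{b|y|}\int(iy-t)^{-1}d\mu(t)$), and since $\varphi$ grows only linearly on $i\RR$ the decay survives. Two applications of Lemma~5.2(ii) remove $x_1,x_2$. This is what the interlacing hypothesis \eqref{eps} is really for; it is not a smallness condition for a limiting argument but a sign condition that makes $\varphi$ Herglotz.
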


Observe, that  condition $\delta<d(\Lambda)/4$  implies that $\tilde{\Lambda}$ itself is a separated set.

We postpone the proofs of Propositions \ref{p1}-\ref{p3}. Now, let us prove Theorem \ref{mainth}. 

\begin{proof} We consider two cases.

(i) Assume additionally that $\Lambda$ is a subset of $\alpha\mathbb{Z}$, for some $\alpha>0$. 
In view of Theorem~1.1 and Propositions 2.1 and 2.2, we have 
$$
G(\Lambda)=\pi/\alpha-R(\alpha\mathbb{Z}\setminus\Lambda)=\pi/\alpha-\pi D^{BM}(\alpha\mathbb{Z}\setminus \Lambda)$$$$=\pi/\alpha-\pi(1/\alpha-D_{BM}(\Lambda))=\pi D_{BM}(\Lambda).
$$
 Theorem \ref{mainth} is proved for the subsequences of $\alpha\mathbb{Z}$.

(ii) Fix any separated set $\Lambda$ and positive $\delta<d(\Lambda)/4$. Clearly, there is  a set $\tilde{\Lambda}$ \eqref{per} satisfying \eqref{eps} and such that $\tilde{\Lambda}\subset\alpha\mathbb{Z}$, for some sufficiently small $\alpha>0.$ By Proposition~\ref{p3}, $G(\tilde{\Lambda})=G(\Lambda)$.

 Using the definition of  lower Beurling--Malliavin density (see below), one may easily check that  $D_{BM}(\tilde{\Lambda})= D_{BM}(\Lambda)$. So,  by (i), we  conclude that  $$G(\Lambda)=G(\tilde{\Lambda})=\pi D_{BM}(\tilde{\Lambda})=\pi D_{BM}(\Lambda).$$

\end{proof}

So, we have used Theorem 1.1 for {\it separated sets} to deduce Theorem \ref{mainth}. 
We notice, that in fact these two results are equivalent. The converse implication is given by

\begin{remark}
 Beurling--Malliavin's Theorem 1.1 for separated sets follows from Theorem~\ref{mainth}.
\end{remark}

To check this, one may use a similar proof where instead of  Propositions  2.3 one needs 

\begin{proposition} Assume $\Lambda$ is a separated set. There exists $\delta>0$ such that for all numbers $|\varepsilon_\lambda|<\delta,\lambda\in\Lambda,$ the set $\tilde{\Lambda}$ in \eqref{per} satisfies
$$
R(\tilde{\Lambda})=R(\Lambda).
$$
\label{p4}
\end{proposition}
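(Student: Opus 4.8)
My plan is to argue directly on the Paley--Wiener side, without invoking Theorem~1.1 (so as to keep the Remark non-circular). The starting point is the classical fact that $\E$ fails to be complete in $L^2(-a,a)$ precisely when some entire function $F\not\equiv 0$ of exponential type at most $a$ with $F\in L^2(\RR)$ vanishes on $\Lambda$; in particular, for every $a>R(\Lambda)$ one can take such an $F$ of exponential type \emph{strictly} less than $a$. Since $d(\tilde\Lambda)\ge d(\Lambda)-2\delta$, after shrinking $\delta$ (say $\delta<d(\Lambda)/8$, and after a harmless translation so that $0\notin\Lambda\cup\tilde\Lambda$) I may assume that $\tilde\Lambda$ is separated, and then $\Lambda$ is in turn a $\delta$-perturbation of $\tilde\Lambda$ (replace each $\varepsilon_\lambda$ by $-\varepsilon_\lambda$). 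Hence it suffices to prove one inequality, say $R(\Lambda)\le R(\tilde\Lambda)$; the reverse one follows by the same argument with the roles of $\Lambda$ and $\tilde\Lambda$ interchanged.

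To prove $R(\Lambda)\le R(\tilde\Lambda)$ I fix $a>R(\tilde\Lambda)$, pick $b\in(R(\tilde\Lambda),a)$, and choose a nonzero $F$ of exponential type $\le b$ with $F\in L^2(\RR)$ vanishing on $\tilde\Lambda=\{\tilde\lambda_n\}_n$, where $\Lambda=\{\lambda_n\}_n$ and $\tilde\lambda_n=\lambda_n+\varepsilon_n$. The idea is to move the zeros of $F$ from the points $\tilde\lambda_n$ back to the points $\lambda_n$. Set
\[
\Pi(z)=\prod_{|\lambda_n|\le 1}\frac{z-\lambda_n}{z-\tilde\lambda_n}\;\cdot\;\prod_{|\lambda_n|>1}\frac{\tilde\lambda_n}{\lambda_n}\cdot\frac{z-\lambda_n}{z-\tilde\lambda_n},\qquad G:=F\cdot\Pi .
\]
The convergence of the infinite product rests on the elementary identity $\frac1{\lambda_n}+\frac1{z-\lambda_n}=\frac{z}{\lambda_n(z-\lambda_n)}$, which gives
\[
\frac{\tilde\lambda_n}{\lambda_n}\cdot\frac{z-\lambda_n}{z-\tilde\lambda_n}=1+\frac{\varepsilon_n\,z}{\lambda_n(z-\lambda_n)}+O\!\Big(\frac{\varepsilon_n^2}{\lambda_n^2}\Big);
\]
since $\Lambda$ is separated, $\sum_n\lambda_n^{-2}<\infty$, and the product converges locally uniformly off $\Lambda\cup\tilde\Lambda$. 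Thus $\Pi$ is meromorphic with zeros exactly on $\Lambda$ and simple poles exactly on $\tilde\Lambda$; since $F$ vanishes on $\tilde\Lambda$, the function $G=F\Pi$ is entire, $G\not\equiv 0$, and $G|_\Lambda=0$.

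The main work, and the step I expect to be hardest, is to show that $G$ (after a harmless correction) lies in the Paley--Wiener class of type $a$. I will prove that $G$ has exponential type $\le b$ and polynomial growth on $\RR$. Splitting the product defining $\Pi$ at $|\lambda_n|\approx 2|z|$: the ``far'' factors differ from $1$ by at most $C|z|\lambda_n^{-2}$ and contribute $O(1)$ to $\log|\Pi(z)|$; for the ``near'' ones (at most $O(|z|)$ separated indices) one bounds $\log|\tilde\lambda_n/\lambda_n|\le C\delta/|\lambda_n|$ and $\log\frac{|z-\lambda_n|}{|z-\tilde\lambda_n|}\le\log(1+\delta/|z-\tilde\lambda_n|)$, and summation gives $\log|\Pi(z)|\le \delta\log|z|+\log\big(1+\delta/\dist(z,\tilde\Lambda)\big)+O(1)$. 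Near a point of $\tilde\Lambda$ the last term blows up, but this is compensated by the zero of $F$ there: by Bernstein's inequality, $\|F'\|_{L^\infty(\RR)}\le b\|F\|_{L^\infty(\RR)}$, hence $|F(z)|\le C\,\dist(z,\tilde\Lambda)$ in a fixed neighbourhood of $\tilde\Lambda$. Combining these, $\log|G(z)|\le b|z|+\delta\log|z|+O(1)$ on $\CC$ and $|G(x)|\le C(1+|x|)^{C\delta}$ on $\RR$. Finally I multiply $G$ by a sufficiently high power of $z^{-1}\sin(\eta z)$, with $\eta>0$ so small that the total exponential type added stays below $a-b$; the result is a nonzero entire function of exponential type $<a$, lying in $L^2(\RR)$, still vanishing on $\Lambda$. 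Therefore $\E$ is not complete in $L^2(-a,a)$, i.e.\ $R(\Lambda)\le a$; letting $a\downarrow R(\tilde\Lambda)$ yields $R(\Lambda)\le R(\tilde\Lambda)$, and the symmetric argument finishes the proof.

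So the crux is the growth analysis of $\Pi$: the naive product $\prod(z-\lambda_n)/(z-\tilde\lambda_n)$ diverges, and one must simultaneously exploit the cancellation in $\tfrac1{\lambda_n}+\tfrac1{z-\lambda_n}$ to obtain convergence of the regularized product and to control its behaviour near the relocated zeros, where Bernstein's inequality for $F'$ is the essential input. (One could instead deduce the proposition at once from Theorem~1.1 together with the obvious stability of $D^{BM}$ under bounded perturbations of separated sets, but then the derivation of Theorem~1.1 from Theorem~\ref{mainth} would be circular.)
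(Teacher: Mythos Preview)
The paper does not give a detailed proof of this proposition: it simply observes that it follows from Theorem~1.1 together with the evident stability of $D^{BM}$ under bounded perturbations of a separated set, and then remarks that ``one may prove it by elementary means involving standard estimates of Weierstrass products.'' You have correctly identified that the first route would make Remark~2.4 circular, and you supply precisely the elementary Weierstrass-product argument that the paper only alludes to. In that sense your approach is the one the paper \emph{intends} for the purposes of the Remark, but you actually carry it out rather than merely pointing to it.

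Your argument is sound in outline: moving the zeros of $F$ via the regularised product $\Pi$ is the standard device, and the growth analysis you sketch does yield that $\Pi$ has exponential type~$0$ with at most polynomial growth along horizontal lines, so $G=F\Pi$ inherits the type of $F$; the correction by powers of $z^{-1}\sin(\eta z)$ then restores membership in $L^2(\RR)$ at an arbitrarily small cost in type. Two minor points are worth tightening. First, the bound $\log|\Pi(z)|\le \delta\log|z|+\cdots$ should really read $C\delta\log|z|$ with a constant $C$ depending on $d(\Lambda)$: both the sum $\sum_{|\lambda_n|\le 2|z|}\delta/|\lambda_n|$ and the sum $\sum_{n\ne n_0}\log(1+\delta/|z-\tilde\lambda_n|)$ over the non-closest near indices contribute terms of this size, not $O(1)$. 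Second, since $\Pi$ has poles on $\tilde\Lambda$, the cleanest way to conclude that $G$ has type $\le b$ is to apply the maximum principle on circles $|z|=R_k$ chosen (by separation) to stay a fixed distance from $\tilde\Lambda$, where your estimate gives $|G|\le CR_k^{C\delta}e^{bR_k}$ directly. Both are routine fixes; the proof goes through.
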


Clearly, this result easily follows from Theorem 1.1 and the definition of $D^{BM}$. We remark that one may prove it by elementary means involving standard estimates of Weierstrass products.

\section{Proof of Proposition \ref{p1}}
There exist at least five definitions of the upper Beurling--Malliavin density (see paper \cite{kt} which is devoted to equivalence of different definitions). We start with the most well-known:
\begin{definition} We will say that the sequence $\Lambda\subset\mathbb{R}$ is strongly $a$-regular if its counting function $n_\Lambda$ satisfies
$$\int_\mathbb{R}\frac{|n_\Lambda(x)-ax|}{1+x^2}dx<\infty.$$
\end{definition}
\begin{definition} The upper Beurling--Malliavin density $D^{BM}(\Lambda)$ is the infimum of numbers $a$ such that the function $n_{\Lambda\cup\Lambda'}$ is strongly $a$-regular for some $\Lambda'\subset\mathbb{R}$.
\label{def1}
\end{definition}
This definition goes back to J.-P.Kahane. The original definition given by Beurling and Malliavin used the notion of {\it short system of intervals}, see \cite[p. 397--398]{kt}. We need one more equivalent definition which was found by R. Redheffer, see \cite{rr, rr2}.
\begin{definition} The upper Beurling--Malliavin density $D^{BM}(\Lambda)$ is the infimum of numbers $a$ such that there exists
a sequence of distinct integers $n_k$ such that
$$\sum_{\lambda_k\in\Lambda}\biggl{|}\frac{1}{\lambda_k}-\frac{a}{n_k}\biggr{|}<\infty.$$
\label{def2}
\end{definition}
Now we give a "dual" $ $ definition of {\it the lower Beurling--Malliavin density}.
\begin{definition} The lower  Beurling--Malliavin density $D_{BM}(\Lambda)$ is the supremum of numbers $a$ such that the function $n_{\Lambda'}$ is strongly $a$-regular for some $\Lambda'\subset\Lambda$.
\label{def3}
\end{definition}

 From the equivalence of Definitions \ref{def1} and \ref{def2} it follows that if $D^{BM}(\Lambda)=a$, then
for every $b> a$ there exists $\Lambda_0\subset b^{-1}\mathbb{Z}$ such that $n_\Lambda-n_{\Lambda_0}\in L^1((1+x^2)^{-1}dx)$. Hence, for every $b>a$ the sequence $\Lambda'$ in Definition \ref{def1}
can be taken as a subset of the arithmetic progression $b^{-1}\mathbb{Z}$.
%From this it is easy to see that
%if $\Lambda\subset{\mathbb{Z}}$ then $\Lambda'$ can be taken as a subset of $\mathbb{Z}\setminus{\Lambda}.$

Let us now prove Proposition \ref{p1}. For simplicity, using re-scaling, we may assume that $\alpha=1$.

\begin{proof} Set $\Gamma:=\mathbb{Z}\setminus\Lambda$. First of all we will show that if $D^{BM}(\Gamma)=a$, then for any $b>a$ we can choose $\Gamma'\subset\Lambda$ such that $n_{\Gamma\cup\Gamma'}$ is strongly $b$-regular. Indeed, let as above
 $\Gamma_0=\{b^{-1}n_k\}\subset b^{-1}\mathbb{Z}$ (where $n_k$ are distinct integers as in Definition \ref{def2}) and $n_\Gamma-n_{\Gamma_0}\in L^1((1+x^2)^{-1}dx)$. Put
$\Gamma_1=b^{-1}\mathbb{Z}\setminus\Gamma_0$. We have that $\Gamma\cup\Gamma_1$ is strongly $b$-regular.
It would be natural to put
$\Gamma' = \{[\gamma]:\gamma\in\Gamma_1\}.$ However with this definition it is possible that $\Gamma'\cap\Gamma\neq\emptyset$. To avoid this
we define
$$\Gamma'_{ex}=\{\gamma_k\in\Gamma: \gamma_k\in[\Gamma_1] \}$$
and shift the points from $\Gamma'_{ex}$ in the following way:
$$\Gamma'=([\Gamma_1]\setminus\Gamma'_{ex})\cup\{[b^{-1}n_k]: \gamma_k\in\Gamma'_{ex}\}.$$
Using  again the fact that $n_\Gamma-n_{\Gamma_0}\in L^1((1+x^2)^{-1}dx)$ and that $[b^{-1}n_k]\not\in[\Gamma_1]$, $\gamma_k\in\Gamma'_{ex}$ we get that $n_{\Gamma\cup\Gamma'}$ is
strongly $b$-regular.

\medskip

Now suppose that $n_{\Gamma\cup\Gamma'}$ is strongly $a$-regular for some $\Gamma'\subset\Lambda$. Then $n_{\mathbb{Z}\setminus{(\Gamma\cup\Gamma')}}$ is strongly $(1-a)$-regular. Since $\mathbb{Z}\setminus(\Gamma\cup\Gamma')\subset\Lambda$, we have $D_{BM}(\Lambda)\geq 1-a$ whence $D^{BM}(\Gamma)+D_{BM}(\Lambda)\geq 1$. 

On the other hand, if $n_{\Lambda''}$ is strongly $(1-a)$-regular for some $\Lambda''\subset\Lambda$, then $n_{\mathbb{Z}\setminus{\Lambda''}}$ is strongly $a$-regular and $\Gamma\subset\mathbb{Z}\setminus{\Lambda''}$. 
 So, $D^{BM}(\Gamma)+D_{BM}(\Lambda)\leq 1$.
\end{proof}

\section{Proof of Proposition \ref{p2}}

\begin{proof} Again, we may assume that $\alpha=1$ and put $\Gamma:=\mathbb{Z}\setminus\Lambda$. It is clear that $R(\Gamma),G(\Lambda)\leq 2\pi$. 

If the system $E_\Gamma:=\{e^{i\gamma t}\}_{\gamma\in\Gamma}$ is not complete in $L^2(0,2a)$, $0<a<\pi$,
then there exists a non-trivial function $f\in L^2(\rl)$ which vanishes outside  $(0,2a)$ and $f\perp E_\Gamma$. Take any small positive number $\epsilon$ and consider the convolution $g=f\ast h$, where $h$ is a smooth function supported by $[0,\varepsilon]$. Then $g$ is smooth, vanishes outside $(0,2a+\varepsilon)$ and is orthogonal to $E_\Gamma$. Since $\{e^{int}\}_{n\in\mathbb{Z}}$ is an orthogonal basis in $L^2(0,2\pi)$ we obtain
$$g(x)=\sum_{n\in\mathbb{Z}}a_ne^{inx}=\sum_{n\in\mathbb{Z}\setminus{\Lambda}}a_ne^{inx},\quad \{a_n\}\in\ell^1.$$
So, the measure  $$\mu:=\sum_{n\in \Gamma}a_n\delta_n$$ belongs to $M(\Lambda)$ and  has a spectral gap of length at least $2\pi - 2a-\varepsilon$. Since $\varepsilon$ is arbitrary, we conclude that $R(\Gamma)+G(\Lambda)\geq \pi$.

Now, suppose that there exists a non trivial measure $$\mu:=\sum_{n\in\Lambda}a_n\delta_n\in M(\Lambda)$$ with a spectral gap of size $2a$. Without loss of generality we can assume that $\hat{\mu}\equiv 0$ on $(0,2a)$.
Put $g(x)=\hat{\mu}\bigl{|}_{(0,2\pi)}$. We have $g\in L^2(0,2\pi)$ and $g\perp E_\Gamma$. Hence,
$R(\Gamma)\leq\pi-a$. So, $R(\Gamma)+G(\Lambda)\leq \pi$ and Proposition \ref{p2} is proved.
\end{proof}

%\medskip

\section{Proof of Proposition \ref{p3}}

We will use the following well-known fact (see e.g. \cite[Lemma 2]{mp}).  For the sake of completeness we give its proof here.
\begin{lemma}
Let $\mu\in M(\rl)$. Then the Fourier transform of $\mu$ vanishes on $[-a,a]$ if and only if
\begin{equation}
\lim_{y\rightarrow\pm\infty}e^{by}\int_{\mathbb{R}}\frac{d\mu(t)}{iy-t}=0,
\label{f1}
\end{equation}
for every $b\in(-a,a)$.
\end{lemma}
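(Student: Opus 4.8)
The plan is to prove both directions via the classical formula for the Fourier transform of a finite measure $\mu$ in terms of its Cauchy (or Poisson) integral. Write $C\mu(z) = \int_\RR \frac{d\mu(t)}{z-t}$ for $z=iy$ in the upper or lower half-plane. The standard identity is that $\hat\mu(x) = \int e^{-ixt}\,d\mu(t)$ and, after taking the inverse Fourier transform of a bounded holomorphic function, one has the Laplace-type representation: for $y>0$,
\[
\int_\RR \frac{d\mu(t)}{iy-t} = -i\int_0^\infty e^{-xy}\,\hat\mu(x)\,dx,
\]
while for $y<0$ one gets the analogous formula with $\int_{-\infty}^0$. (One derives this by writing $\frac{1}{iy-t} = -i\int_0^\infty e^{-xy}e^{-ixt}\,dx$ for $y>0$ and applying Fubini, which is justified since $\mu$ is a finite measure and $y>0$.) So the quantity in \eqref{f1} is, up to the harmless factor $-i$, exactly $e^{by}$ times a one-sided Laplace transform of $\hat\mu$.

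First I would treat the ``only if'' direction: assume $\hat\mu \equiv 0$ on $[-a,a]$. Then for $y>0$ the integral $\int_0^\infty e^{-xy}\hat\mu(x)\,dx$ only picks up contributions from $x\ge a$, so it is bounded by $\|\hat\mu\|_\infty \int_a^\infty e^{-xy}\,dx = \|\hat\mu\|_\infty e^{-ay}/y$. Multiplying by $e^{by}$ with $b<a$ gives a bound $O(e^{-(a-b)y}/y)\to 0$ as $y\to+\infty$. The case $y\to-\infty$ is symmetric, using the contributions from $x\le -a$ and $b>-a$. This handles \eqref{f1} for every $b\in(-a,a)$.

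For the converse, suppose \eqref{f1} holds for every $b\in(-a,a)$; I want $\hat\mu=0$ on $[-a,a]$, equivalently (since $\hat\mu$ is continuous) on $(-a,a)$. Fix $b\in(0,a)$. The hypothesis at $+\infty$ says $e^{by}\int_0^\infty e^{-xy}\hat\mu(x)\,dx \to 0$; substituting $y = \sigma$ and thinking of $F(\sigma) := \int_0^\infty e^{-x\sigma}\hat\mu(x)\,dx$ as the Laplace transform of the bounded continuous function $\hat\mu\cdot\mathbf 1_{[0,\infty)}$, the condition $e^{b\sigma}F(\sigma)\to 0$ for all $b<a$ forces $F$ to extend holomorphically and decay in the strip, and by a Paley--Wiener / uniqueness-of-Laplace-transform argument $\hat\mu(x)=0$ for a.e.\ $x\in[0,a)$, hence for all such $x$ by continuity. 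Symmetrically, using $b\in(-a,0)$ and the limit at $-\infty$ gives $\hat\mu(x)=0$ for $x\in(-a,0]$. Combining, $\hat\mu\equiv 0$ on $(-a,a)$, and by continuity on $[-a,a]$.

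The main obstacle is making the converse rigorous: the cleanest route is to invoke the classical fact that if $\psi\in L^\infty(0,\infty)$ and its Laplace transform $F(\sigma)=\int_0^\infty e^{-x\sigma}\psi(x)\,dx$ satisfies $e^{b\sigma}F(\sigma)\to 0$ as $\sigma\to+\infty$ for every $b<a$, then $\psi$ vanishes a.e.\ on $[0,a)$. One can prove this by a contour-shift argument: $F$ is holomorphic for $\re\sigma>0$, the decay hypothesis lets one push the inversion contour to $\re\sigma = -b$ for any $b<a$, and estimating the resulting integral shows the inverse transform is supported in $[a,\infty)$. Alternatively — and this is perhaps the slickest for a self-contained note — one multiplies \eqref{f1} by test functions and integrates $y$ over $\RR$ to recover $\hat\mu$ against a dense family, reducing everything to Fubini and the injectivity of the Fourier transform on finite measures. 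Either way, once this analytic lemma is in hand, Proposition-style bookkeeping finishes the proof.
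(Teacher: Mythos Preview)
Your forward direction is correct and arguably cleaner than the paper's: once you have the Laplace representation $C\mu(iy)=-i\int_0^\infty e^{-xy}\hat\mu(x)\,dx$ for $y>0$ (and its analogue for $y<0$), the vanishing of $\hat\mu$ on $[-a,a]$ immediately gives the bound $|C\mu(iy)|\le \|\hat\mu\|_\infty\,e^{-a|y|}/|y|$, and \eqref{f1} follows. The paper instead proves the algebraic identity $\int\frac{e^{ibt}-e^{ibz}}{t-z}\,d\mu(t)=0$ and reads off the limit from that; both routes are fine.

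The converse, however, has a genuine gap. You reduce to the claim that if $\psi\in L^\infty(0,\infty)$ has Laplace transform $F(\sigma)=\int_0^\infty e^{-x\sigma}\psi(x)\,dx$ with $e^{b\sigma}F(\sigma)\to 0$ for every $b<a$, then $\psi=0$ on $[0,a)$, and you propose to ``push the inversion contour to $\re\sigma=-b$''. But the hypothesis controls $F(\sigma)$ only for real $\sigma\to+\infty$; it says nothing about vertical lines, and $F$ does not extend holomorphically past $\re\sigma=0$ in general (concretely, $F(s)=iC\mu(is)$ for $\re s>0$, and $C\mu$ jumps across $\RR$ by $\mu$ itself). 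So neither the contour shift nor a direct appeal to Paley--Wiener is available. To pass from real-axis decay to half-plane decay one needs an additional Phragm\'en--Lindel\"of step, which you have not supplied; your alternative (``multiply by test functions and integrate'') is too vague to fill this in.

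For comparison, the paper handles the converse by introducing, for each fixed $b\in(-a,a)$, the \emph{entire} function
\[
H(z)=\int_\RR \frac{e^{ibt}-e^{ibz}}{t-z}\,d\mu(t),
\]
which is of exponential type and bounded on $\RR$ (since $\bigl|\tfrac{e^{ibt}-e^{ibx}}{t-x}\bigr|\le |b|$). The hypothesis \eqref{f1} gives $H(iy)\to 0$ as $|y|\to\infty$, so $H$ is bounded on both coordinate axes; Phragm\'en--Lindel\"of in each quadrant and Liouville then force $H\equiv 0$, from which $\hat\mu(b)=0$ follows. The auxiliary function $H$ is precisely what provides the global analytic control that your Laplace transform $F$ lacks.
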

\begin{proof}
Let $\mu$ be such that $\int_\mathbb{R}e^{ibt}d\mu(t)=0$, $|b|\leq a$. Then, for any $z\in\mathbb{C}$,
$$\int_\mathbb{R}\frac{e^{ibt}-e^{ibz}}{t-z}d\mu(t)=ie^{ibz}\int_\mathbb{R}\int_0^be^{iu(t-z)}du\,d\mu(t)=0.$$
Hence,
\begin{equation}
\lim_{y\rightarrow\pm\infty}iye^{by}\int_{\mathbb{R}}\frac{d\mu(t)}{iy-t}=
\lim_{y\rightarrow\pm\infty}iy\int_\mathbb{R}\frac{e^{ibt}}{iy-t}d\mu(t)=\int_\mathbb{R}e^{ibt}d\mu(t)=0.
\label{f2}
\end{equation}
Conversely, for any $b\in(-a,a)$ put
$$H(z):=\int_\mathbb{R}\frac{e^{ibt}-e^{ibz}}{t-z}d\mu(t).$$
Clearly $H$ is an entire function of Cartwright class (which means that its {\it logarithmic integral} converges, see \cite{Levin}, Lec.16). On the other hand, by \eqref{f2} we have $\lim_{|y|\rightarrow\infty}|H(iy)|=0$. Hence,
$H(iy)\equiv0$ and the statement follows from \eqref{f2}.
\end{proof}

We will also need an elementary lemma:

\begin{lemma} Let $\Lambda$ be a separated set. Then

$(i)$ $G(\Lambda)=G(\Lambda-x)$, for  every  $x\in\mathbb{R}$, where $\Lambda-x:=\{\lambda-x:\lambda\in\Lambda\}$;

$(ii)$  $G(\Lambda\cup\{\lambda'\})=G(\Lambda)$, for  every  $\lambda'\not\in\Lambda$;

$(iii)$  if $G(\Lambda)>0$, then for every positive $a<G(\Lambda)$ there is a measure $$\mu=\sum_{\lambda\in\Lambda} c_\lambda\delta_\lambda$$with spectral gap $[-a,a]$ and such that
$$
|c_\lambda|=O(|\lambda|^{-2}), \qquad |\lambda|\to\infty.
$$

\end{lemma}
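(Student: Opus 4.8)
The plan is to prove the three statements $(i)$, $(ii)$, $(iii)$ in order, each building on basic properties of Fourier transforms of finite measures supported on separated sets.

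For $(i)$, the idea is immediate: if $\mu=\sum_\lambda c_\lambda\delta_\lambda\in M(\Lambda)$ has $\hat\mu$ vanishing on $(-a,a)$, then the translated measure $\mu_x:=\sum_\lambda c_\lambda\delta_{\lambda-x}\in M(\Lambda-x)$ has $\widehat{\mu_x}(\xi)=e^{ix\xi}\hat\mu(\xi)$, so it vanishes on the same interval. Multiplication by a unimodular exponential factor is a bijection between $M(\Lambda)$ and $M(\Lambda-x)$ preserving the size of any spectral gap, hence $G(\Lambda-x)=G(\Lambda)$. This is a one-line argument.

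For $(ii)$, clearly $G(\Lambda\cup\{\lambda'\})\ge G(\Lambda)$ since $M(\Lambda)\subset M(\Lambda\cup\{\lambda'\})$. For the reverse inequality, suppose $\mu=c\,\delta_{\lambda'}+\sum_{\lambda\in\Lambda}c_\lambda\delta_\lambda$ has $\hat\mu$ vanishing on $(-a,a)$; I want to produce a nonzero measure in $M(\Lambda)$ with a gap of size arbitrarily close to $2a$. If $c=0$ we are already done, so assume $c\neq 0$. The natural trick is to ``kill'' the point mass at $\lambda'$ by convolving: pick a small $\eps>0$ and a function $\varphi$ such that $\hat\varphi$ vanishes in a neighbourhood of $\lambda'$ but is supported in $(-\eps,\eps)$ --- wait, that is backwards; instead one forms a new measure by superposing two translates of $\mu$, say $\mu - e^{i\eta\lambda'}$ times a translate, chosen so the coefficients at $\lambda'$ cancel. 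More cleanly: consider $\nu$ obtained from $\mu$ by replacing $\delta_{\lambda'}$ with a suitable combination; but the cleanest route, and the one I expect the authors use, is to convolve $\mu$ with a finitely supported discrete measure or with a smooth bump and then restrict. Actually the slickest: let $\psi$ be a Schwartz function with $\hat\psi$ supported in $(-\eps,\eps)$ and $\hat\psi(0)=0$ but $\psi(\lambda'-\lambda)\ne$ --- no. Let me instead use the approach of the previous lemma's proof style: since $\lambda'$ is isolated from $\Lambda$ (separation), choose a trigonometric polynomial-type multiplier. Concretely, pick $h$ smooth, supported in $[0,\eps]$ with $\hat h$ never vanishing, form $\mu\ast h$ which is an $L^1$ function $g$ with $\hat g=\hat\mu\hat h$ vanishing on $(-a,a)$ and $g$ having a ``spike'' structure; then subtract off a multiple of a translate of $g$ to eliminate the contribution near $\lambda'$ --- this requires care and is where I expect the real work to lie. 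The key obstacle is ensuring the resulting measure is still \emph{nonzero} and still supported on $\Lambda$ (not just asymptotically), which forces one to be careful about how the cancellation is arranged; the separation hypothesis $d(\Lambda)>0$ together with $\lambda'\notin\Lambda$ guarantees there is room to do this.

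For $(iii)$, the point is a \emph{regularization/decay improvement}: starting from any $\mu=\sum c_\lambda\delta_\lambda\in M(\Lambda)$ with gap $(-b,b)$ for some $b$ with $a<b<G(\Lambda)$, one convolves $\mu$ with a well-chosen Schwartz function (or multiplies $\hat\mu$ by a smooth cutoff) to trade a little bit of the gap for rapid decay of the new coefficients. Precisely, pick $b$ with $a<b<G(\Lambda)$, take $\mu$ with spectral gap $(-b,b)$, and let $\varphi$ be a Schwartz function with $\hat\varphi$ supported in $(-(b-a),b-a)$ and $\hat\varphi\equiv 1$ near $0$; set $g:=\mu\ast\varphi$, an entire-type smooth function with $\hat g=\hat\mu\cdot\hat\varphi$ vanishing on $(-a,a)$, and with $g$ itself decaying faster than any polynomial. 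Now sample: because $\Lambda$ is separated, the restriction map from such nice functions to $\ell^\infty(\Lambda)$-sequences behaves well, and one defines $c_\lambda':=g(\lambda)$. The new measure $\mu':=\sum_\lambda c_\lambda'\delta_\lambda$ then has $|c_\lambda'|=|g(\lambda)|=O(|\lambda|^{-2})$ from the decay of $g$. One must check $\widehat{\mu'}$ still vanishes on $(-a,a)$ --- this follows because $\mu'$ differs from $g$ only in that $g$ is replaced by its ``samples,'' and using Poisson-type summation / the structure that $g$ came from a measure on $\Lambda$ convolved with $\varphi$; more honestly, one shows directly that $\mu'$ is nonzero (for generic choices, or by a dimension/nondegeneracy argument using separation) and that its Fourier transform vanishes where required by again invoking the convolution structure. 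The main difficulty in $(iii)$ is confirming $\mu'\neq 0$: a priori the samples $g(\lambda)$ could all vanish; here one argues that if $g(\lambda)=0$ for all $\lambda\in\Lambda$ then $\mu\ast\varphi$ vanishes on $\Lambda$, and choosing $\varphi$ appropriately (e.g.\ $\hat\varphi=1$ on the support relevant to $\hat\mu$, which is possible since $\hat\mu$ is an entire function of exponential type with the gap) forces $g$ to essentially reproduce $\mu$ on $\Lambda$, contradicting $\mu\ne 0$. Thus all three items reduce to elementary manipulations with convolutions and the separation hypothesis, with the nondegeneracy checks in $(ii)$ and $(iii)$ being the only genuinely delicate points.
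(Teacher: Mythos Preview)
Your treatment of (i) is fine. For (ii) you circle through several ideas without settling on one; the clean trick (which the paper leaves to the reader) is to multiply the given measure by $h(x)=e^{i\eps x}-e^{i\eps\lambda'}$ for a generic small $\eps>0$: this kills the mass at $\lambda'$, keeps the support inside $\Lambda$, shrinks the gap by only $\eps$, and is nonzero for all but countably many values of $\eps$.

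The genuine problem is your approach to (iii). Convolving $\mu$ with a Schwartz function $\varphi$ gives a smooth function $g=\mu*\varphi$ whose Fourier transform $\hat g=\hat\mu\cdot\hat\varphi$ does vanish on $(-a,a)$; but the measure $\mu'=\sum_\lambda g(\lambda)\,\delta_\lambda$ obtained by \emph{sampling} $g$ on $\Lambda$ has no reason whatsoever to inherit this spectral gap. Sampling corresponds on the Fourier side to a periodization (for an irregular $\Lambda$, to something even less tractable), and this destroys spectral gaps rather than preserving them. The Poisson-summation heuristic you invoke does not apply, and the argument cannot be repaired along these lines. Your parenthetical alternative, ``multiply $\hat\mu$ by a smooth cutoff,'' is the same operation written dually and has the same defect.

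The paper's proof of (iii) is a one-liner that goes in the \emph{opposite} direction: instead of convolving the measure, one multiplies it pointwise. Choose $\eps>0$ with $a+\eps<G(\Lambda)$, take any nonzero $\nu=\sum c_\lambda\delta_\lambda\in M(\Lambda)$ with spectral gap $[-a-\eps,a+\eps]$, and set $\mu=h\,\nu$ where $h$ is a fast-decreasing function whose Fourier transform is supported in $[-\eps,\eps]$. Then $\mu=\sum h(\lambda)c_\lambda\,\delta_\lambda$ is still supported on $\Lambda$, its coefficients satisfy $|h(\lambda)c_\lambda|=O(|\lambda|^{-2})$ automatically (the $c_\lambda$ are bounded since $\nu$ is finite), and $\hat\mu=\hat h*\hat\nu$ vanishes on $[-a,a]$ because $\hat\nu$ vanishes on $[-a-\eps,a+\eps]$ while $\supp\hat h\subset[-\eps,\eps]$. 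Nondegeneracy is immediate once $h$ is chosen (or translated) so that $h(\lambda_0)\ne 0$ at some $\lambda_0$ in the support of $\nu$.
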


Let us, for example, check (iii). Take a positive $\varepsilon$ satisfying $a+\varepsilon<G(\Lambda)$, and choose any measure $\nu$ with spectral gap on  $[-a-\epsilon,a+\epsilon].$  Then put $\mu=h\nu$, where $h$ is a fast decreasing  function whose spectrum lies on   $[-\epsilon,\epsilon].$

Now, we prove Proposition 2.3. 

%We will use the following notations: Given positive quantities $U(x)$, $V(x)$, the notation $U(x)\lesssim V(x)$ (or, equivalently, $V(x)\gtrsim U(x)$) means that there is a constant $C$ such that $U(x)\leq CV(x)$ holds for all $x$ in the set in question. We write $U(x)\simeq V(x)$ if both $U(x)\lesssim V(x)$ and $V(x)\lesssim U(x)$.

\begin{proof} In the proof below we will assume that  $G(\Lambda)>0$, and show that $G(\tilde{\Lambda})\geq G(\Lambda)$ for every $\tilde{\Lambda}$ satisfying the assumptions of Proposition 2.3.
The same proof works as well in the opposite direction: If $G(\tilde{\Lambda})>0$ then $G(\Lambda)\geq G(\tilde{\Lambda})$. It will follow that $G(\tilde{\Lambda})= G(\Lambda)$.
It also shows that $G(\tilde{\Lambda})= 0$ if $G(\Lambda)=0$.

The proof will consist of several steps.

1.   We may write
$$
\Lambda=\{\lambda_j:j\in\mathbb{Z}\},\qquad \tilde{\Lambda}=\{\tilde{\lambda_j}:j\in\mathbb{Z}\},
$$where
$$
\delta/2<\tilde{\lambda_j}-\lambda_j<\delta, \qquad j\in\mathbb{Z}.
$$We may also assume that $0\not\in \tilde{\Lambda}\cup\Lambda$.

2. Consider the meromorphic function
$$
\varphi(z):=-\prod_{j\in\mathbb{Z}}\frac{1-z/\lambda_j}{1-z/\tilde{\lambda_j}}.
$$
One may check that the product converges (see, for example, \cite{Levin}, p. 220).

Since
$$
\arg\frac{1-z/\lambda_j}{1-z/\tilde{\lambda_j}}=\arg(z-\lambda_j)-\arg(z-\tilde{\lambda_j}),
$$
and since $\Lambda$ and $\tilde{\Lambda}$ are interlacing, one may see that $\Im\varphi(z)>0$ whenever  $\Im z>0$. Hence (see \cite{Levin}, p. 220, 221), $\varphi$ admits a representation
$$\varphi(z)=b_1z+b_2+\sum_{\tilde{\lambda}_k\in\tilde{\Lambda}} c_k\biggl{(}\frac{1}{\tilde{\lambda}_k-z}-\frac{1}{\tilde{\lambda}_k}\biggr{)},$$where
$ b_1\geq0, c_k>0,b_2\in\mathbb{R}$ and 
\begin{equation}\label{ck}
\sum_k\frac{c_k}{\tilde{\lambda}^2_k}<\infty.
\end{equation}

Clearly, we have
\begin{equation}\label{y}
|\varphi(iy)|=O(|y|), \qquad |y|\to\infty.
\end{equation}

3. Fix a positive number $a<G(\Lambda)$, and take a measure
$$
\mu=\sum_{j\in\mathbb{Z}}d_j\delta_{\lambda_j}
$$ which has a spectral gap on $[-a,a]$ and whose coefficients satisfy
\begin{equation}\label{dj}
|d_j|=o(|j|^{-2}), \quad |j|\to\infty.
\end{equation}

Then fix two points $x_1,x_2\not\in\Lambda\cup\tilde{\Lambda}$ and consider the meromorphic function
$$
\psi(z):=\frac{\varphi(z)}{(z-x_1)(z-x_2)}\sum_{j\in\mathbb{Z}}\frac{d_j}{z-\lambda_{j}}.
$$
It is easy to check  that 
$$
\psi(z)=\sum_{k\in\mathbb{Z}}\frac{e_k}{z-\tilde{\lambda}_{k}}+\sum_{j=1}^2\frac{f_j}{z-x_j},
$$where 
$$
e_k=\lim_{z\to\tilde{\lambda}_k}(z-\tilde{\lambda}_k)\psi(z)=\frac{c_k}{(\tilde{\lambda}-x_1)(\tilde{\lambda}-x_2)}\sum_{j\in\mathbb{Z}}\frac{d_j}{\lambda_j-\tilde{\lambda_k}}.
$$Since $|\lambda_j-\tilde{\lambda_k}|>\delta/2$, by (\ref{ck}) and (\ref{dj}) we see that $\{e_k:k\in\mathbb{Z}\}\in l^1.$

4. By Lemma 5.1,  we have 
$$
e^{b|y|}\sum_{j\in\mathbb{Z}}\frac{d_j}{iy-\lambda_{j}}\to 0, \quad |y|\to\infty, \mbox{ for every } 0<b<a.
$$
So, by \eqref{y}, the same estimate holds for the function
$$
\sum_{k\in\mathbb{Z}}\frac{e_k}{iy-\tilde{\lambda}_{k}}+ \sum_{j=1}^2\frac{f_j}{iy-x_j}.
$$This shows that $G(\tilde{\Lambda}\cup\{x_1,x_2\})\geq a.$
Since this is true for every $a<G(\Lambda)$, by Lemma 5.2 (ii), we conclude that  $G(\tilde{\Lambda})\geq G(\Lambda)$. 
\end{proof}

\end{document}